\newtheorem{theorem}{Theorem}
\newtheorem{lemma}[theorem]{Lemma}
\newtheorem{corollary}[theorem]{Corollary}
\newtheorem{proposition}[theorem]{Proposition}
\theoremstyle{definition}
\newtheorem{remark}[theorem]{Remark}
\newcommand{\Pic}{{\rm Pic}}
\renewcommand{\L}{{\mathcal L}}
\newcommand{\R}{{\mathbb R}}
\newcommand{\bbZ}{{\mathbb Z}}
\renewcommand{\L}{{\mathcal L}}
\def\geq{\geqslant}
\def\leq{\leqslant}
\begin{document}
 
\title[Non effective planar  linear systems]{Non effective planar linear systems at the boundary of the Mori cone}

\author{Ciro Ciliberto}
\address{Dipartimento di Matematica, Universit\`a di Roma Tor Vergata, Via O. Raimondo
 00173 Roma, Italia}
\email{cilibert@axp.mat.uniroma2.it}

\author{Rick Miranda}
\address{Department of Mathematics, Colorado State University, Fort Collins (CO), 80523,USA}
\email{rick.miranda@colostate.edu}
 
 
\keywords{Linear systems, Mori cone, nef rays}
 
\maketitle

\centerline{We dedicate this paper to the memory of Alberto Collino}

\begin{abstract} 
In this paper we prove that certain linear systems 
(and all their multiples)  of plane curves 
with general base points and zero--self intersection are empty, 
thus exhibiting further examples of rays 
at the boundary of the Mori cone of a general blow--up of the plane.
\end{abstract}

\section*{Introduction} 

Let $X_n$ be the blow--up of the projective plane at $n$ general points. Let $\L_d(m_1,\ldots, m_n)$, with $d>0$, be the linear system on $X_n$ corresponding to plane curves of degree $d$ with general points of multiplicities at least $m_1\geq \cdots \geq m_n$ (we will use exponential notation for repeated multiplicities). 

We assume $n\geq 3$. We define 
$$
N=\# \{j\;|\;m_j\geq2\}, \quad h=\# \{j\;|\;m_j=1\}
$$
so that $N+h\leq n$. 

Let us make the hypothesis $\mathcal H$: \\
\begin{inparaenum}
\item [(i)] $d \geq m_1 \geq m_2\geq  \cdots \geq m_n \geq 0$, \\
\item [(ii)] $e=d -( m_1+m_2+m_3)\geq 0$,\\
\item [(iii)]  $d^2=\sum_{i=1}^nm_i^2$.
\end{inparaenum}

Note that  condition (ii) implies that $\L_d(m_1,\ldots, m_n)$ is \emph{Cremona reduced}, i.e., its degree $d$ cannot be reduced by applying quadratic transformations based at its assigned base points.

Our goal in this article is to prove the following:

\begin{theorem}\label{thm:main} 
Suppose that $\mathcal{L}_d(m_1,\ldots,m_n)$
is a linear system satisfying the hypothesis $\mathcal H$,
for which $N\leq 8$.
Then for any $k \geq 1$, the system $\mathcal{L}_{kd}(km_1, \ldots,km_n)$ is empty, unless:\\
\begin{inparaenum}
\item [(a)] $\mathcal{L}_d(m_1,\ldots,m_n)$ is a multiple of $\mathcal L_1(1)$;\\
\item [(b)] $\mathcal{L}_d(m_1,\ldots,m_n)$ is a multiple of $ \mathcal L_3(1^9)$.
\end{inparaenum}
\end{theorem}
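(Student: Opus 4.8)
The plan is to translate everything into the intersection theory of $X_n$ and to reduce, via Riemann--Roch, the vanishing of all the groups $H^0(X_n, kD)$ to a single numerical invariant. Write $D = dH - \sum_{i=1}^n m_i E_i$ for the class of $\mathcal{L}_d(m_1,\dots,m_n)$, so that hypothesis (iii) reads $D^2 = 0$ and $-K_{X_n}\cdot D = 3d - \sum_i m_i =: 2v$, where $v=v(D)$ is the virtual dimension. Since $K_{X_n} - kD$ meets a line negatively for every $k\ge 1$, one has $h^2(kD)=0$, and Riemann--Roch gives $h^0(kD) = kv + 1 + h^1(kD)$; hence $kD$ is effective as soon as $kv \ge 0$. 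This at once isolates the exceptions: if $v \ge 0$ then $\mathcal{L}_d$ and all its multiples are nonempty, so the real content is the equivalence
\[
\text{all multiples } \mathcal{L}_{kd}(km_1,\dots,km_n) \text{ empty} \iff v < 0,
\]
together with a classification of the borderline effective cases $v \in \{0,1\}$.

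Second, I would show that $D$ is nef on $X_n$, so that the same holds for every $kD$. Because only the $N \le 8$ points of multiplicity $\ge 2$ can seriously obstruct positivity, I would first blow up just these to a (weak) del Pezzo surface $Y$, on which $D_N = dH - \sum_{i\le N} m_i E_i$ satisfies $D_N^2 = h \ge 0$ by (iii). On $Y$ the Mori cone is generated by finitely many $(-1)$-curves, and the Cremona-reduced inequality (ii), $d \ge m_1+m_2+m_3$, is exactly what is needed to verify $D_N\cdot C \ge 0$ on each of them; pulling back and subtracting the simple exceptional divisors, I would use generality of the remaining $h$ points to keep $D$ nef on all of $X_n$.

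Third comes the structural dichotomy for a nef class of square zero, and this is where I expect the main obstacle. If such a $D$ is effective, every component of a member of $|D|$ is $D$-vertical, so a suitable multiple defines a fibration $X_n \to \Proj^1$ whose general fibre $F$ has $F^2 = 0$ and $-K_{X_n}\cdot F = 2 - 2p_a(F)$, whence $v(D)$ is a nonnegative multiple of $1 - p_a(F)$. Granting that $v<0$ forces non-effectivity of every $kD$ (again nef of square zero), the asserted emptiness follows, and $v\in\{0,1\}$ give the only effective borderline cases, a rational pencil and an elliptic fibration. The hard point is precisely the claim that on a blow-up at \emph{general} points no fibre can have $p_a(F) \ge 2$, i.e. that no effective nef square-zero class has $v \le -1$: higher-genus fibrations \emph{do} occur for special configurations (e.g. the base locus of a pencil of quartics), so very-generality must be genuinely exploited, and for $k \ge 2$ the simple points become multiplicity-$k$ points and the clean del Pezzo reduction of the second step collapses as $N$ effectively jumps past $8$. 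Propagating the general-position input through the nef-ness of $D$ to all multiples simultaneously is the crux.

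Finally, I would pin down the two surviving cases numerically, which is where $N \le 8$ enters decisively. Subtracting $\sum_i m_i \le 3d$ (equivalently $v \ge 0$) from the identity $d^2 = \sum_i m_i^2$ yields $\sum_{i\le N} m_i(m_i-1) \ge d(d-3)$, a sum of at most $N \le 8$ nonzero terms. Combined with $d \ge m_1+m_2+m_3$, this collides strictly with the value $9$ that the borderline Diophantine identity would require (as in the equal-multiplicity, $d=3m$ configurations on nine points), forcing $d$ and the $m_i$ into a short explicit list. The only solutions that survive are the rational pencil $\mathcal{L}_1(1)$ together with its multiples ($v=1$) and the anticanonical elliptic system $\mathcal{L}_3(1^9)$ together with its multiples ($v=0$), giving exactly exceptions (a) and (b).
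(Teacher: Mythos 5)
Your proposal sets up the right numerical framework (the class $D$ has $D^2=0$ by (iii), the exceptions (a) and (b) are exactly the cases with virtual dimension $v=\tfrac{3d-\sum_i m_i}{2}\geq 0$, and Riemann--Roch shows those are genuinely effective), but it does not contain a proof of the theorem: the entire content of the statement is the implication you label as the crux and explicitly leave open, namely that $v<0$ together with $\mathcal H$ and generality of the points forces $h^0(kD)=0$ for all $k$. Your fibration argument only shows that \emph{if} some $kD$ were effective and nef of square zero, it would come from a pencil whose general member has $p_a\geq 2$; ruling this out at general points is not a routine consequence of nefness or of the del Pezzo reduction (which, as you note yourself, breaks down because the $h$ simple points become points of multiplicity $k$ in $\mathcal L_{kd}(km_1,\dots,km_n)$, so $N$ is no longer bounded by $8$ for the multiples). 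Higher-genus square-zero pencils do exist for special configurations, so some genuinely geometric input about general position is unavoidable, and your outline supplies none. The nefness of $D$ on all of $X_n$ (as opposed to on the blow-up at the $N$ multiple points) is likewise asserted rather than proved, and would itself require controlling the negative curves through $n$ general points.

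The paper closes exactly this gap by an entirely different mechanism: induction on $N$ using the $P$--$F$ degeneration of \cite{CM98}. Lemma \ref{lem:two} transfers emptiness from an auxiliary system $\mathcal L_F$ (in which $m_1$ has been raised by $1$) back to $\mathcal L_{kd}(km_1,\dots,km_n)$, because the aspect on $P$ is composed with a pencil and its kernel system is empty; iterating this ``basic move'' together with explicit Cremona reductions (Lemma \ref{lem:three}) either lowers $N$, so that induction applies, or lands on one of the concrete systems $\mathcal L_6(2^8,1^4)$, $\mathcal L_6(2^7,1^8)$, $\mathcal L_9(3^8,1^9)$, whose emptiness is established directly by point-collision arguments (Propositions \ref{prop:6} and \ref{prop:9}, Corollary \ref{prop:6.1}), with Nagata's theorem and \cite{CM21} handling $N=0,1$. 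None of this degeneration machinery appears in your outline, and without it (or a substitute of comparable strength) the argument does not go through.
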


If we set $N_1(X_{n})=\Pic(X_{n})\otimes _\bbZ \R$, 
then any system $\mathcal{L}_d(m_1,\ldots,m_n)$
such that $\mathcal{L}_{kd}(km_1, \ldots,km_n)$ is empty for any $k \geq 1$
determines a rational ray in $N_1(X_{n})$ that is not effective (see \cite [\S 3.1] {CHMR}). Therefore such a ray 
sits in the boundary of the Mori cone of $X_{n}$. 
Any such ray, if rational, is called a \emph{good ray} in \cite [\S 3.2] {CHMR} 
whereas, if irrational, it is called a \emph{wonderful ray}.
No wonderful ray has been discovered up to now. 
Proving that a given ray is good seems in general to be difficult, 
and in \cite {CHMR} the authors were able to exhibit some examples. 
Other examples have been provided in \cite {CM21} 
and they correspond to the case $N=1$ in Theorem \ref{thm:main}.

Our proof is inductive on $N$,
and uses the degeneration introduced in \cite {CM98};
we will recall the main lemma that provides the basic reduction step in \S \ref {sec:ind} 
(see Lemma \ref {lem:two}). 
In \S  \ref {sec:deg6} we will separately treat the cases 
for which our general inductive strategy fails. 
The proof of  Theorem \ref{thm:main} is in \S \ref {sec:proof}. 

\medskip

{\bf Acknowledgements:} Ciro Ciliberto is a member of GNSAGA of INdAM. 

\section{The cases $\mathcal{L}_6(2^8,1^4)$ and $\mathcal{L}_6(2^7,1^8)$}\label{sec:deg6} 

In this section we prove the:

\begin{proposition}\label{prop:6} For any positive integer $k$ the linear system $\L=\mathcal{L}_{6k}((2k)^8,k^4)$ is empty.
\end{proposition}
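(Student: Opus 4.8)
The plan is to strip away the eight double points and recast the statement as a maximal-rank problem, and then to attack it with the degeneration of \cite{CM98} via Lemma \ref{lem:two}. First I would record the numerics on the blow-up $X_{12}$ of $\Proj^2$ at the twelve assigned points: writing $\L=6H-2\sum_{i=1}^8E_i-\sum_{i=9}^{12}E_i$, one has $\L^2=0$ and $\L\cdot K_{X_{12}}=2$, so $k\L$ has virtual dimension $-k$ and arithmetic genus $k+1$ for every $k$. In particular a single numerical inequality cannot force emptiness for all $k$, and $\L^2=0$ places the class on the boundary, exactly the good-ray situation. Now I would contract the eight double points: on the degree-one del Pezzo surface $Y$ obtained by blowing up $p_1,\dots,p_8$ one has $6H-2\sum_{i=1}^8E_i=-2K_Y$, so the Proposition is equivalent to the assertion that no member of $|-2kK_Y|$ has multiplicity $\geq k$ at four general points $q_1,\dots,q_4$, i.e.\ that the restriction map $h^0(Y,-2kK_Y)\to\bigoplus_{j=1}^4\O_Y/\mathfrak m_{q_j}^{\,k}$ is injective. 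Since $-(2k+1)K_Y$ is ample, Kodaira vanishing and Riemann--Roch give $h^0(Y,-2kK_Y)=2k^2+k+1$, while the target has dimension $4\binom{k+1}{2}=2k^2+2k$; thus injectivity is precisely a maximal-rank statement, valid only because $k\geq1$, and is not automatic.

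The cleanest model for the computation is the rational elliptic surface $\pi\colon W\to\Proj^1$ obtained by further blowing up the base point $p_0$ of the anticanonical pencil $|-K_Y|$ and the four points $q_j$, with exceptional divisors $E_0,G_1,\dots,G_4$ and fibre class $F=3H-\sum_{i=1}^8E_i-E_0$. Here the class becomes $D_k=2kE_0+2kF-k\sum_{j=1}^4G_j$, and one checks $D_k\cdot F=2k$, $D_k\cdot E_0=0$ and $D_k^2=0$, so $E_0$ is a section and $D_k$ restricts to a degree-$2k$ multisection. Because $h^0(W,D_k)=h^0(\Proj^1,\pi_*\O_W(D_k))$, the whole problem is to show that the rank-$2k$ bundle $\E_k:=\pi_*\O_W(2kE_0-k\sum_jG_j)$, twisted to $\pi_*\O_W(D_k)=\E_k\otimes\O_{\Proj^1}(2k)$, has \emph{no} global sections, i.e.\ that every summand of $\E_k(2k)$ has negative degree. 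Using $R^1\pi_*\O_W(D_k)=0$ (which holds for a general configuration, the fibrewise degrees $2k$ and $k$ being positive), Riemann--Roch pins down $\deg\E_k(2k)=1-3k$ on the rank-$2k$ bundle, so the average slope is negative; the content is that the four \emph{general} elementary modifications $-kG_j$ balance $\E_k$ enough that no summand reaches degree $0$. For $k=1$ this reads $\E_1(2)\cong\O(-1)^{\oplus2}$, which recovers the classical emptiness in degree six.

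Controlling this splitting type uniformly in $k$ is the main obstacle, so rather than push the bundle computation through by hand I would run the degeneration of \cite{CM98}. The plan is to degenerate $\Proj^2\leadsto V\cup_R Z$ with $V\cong\Proj^2$ and $Z\cong\mathbb F_1$ glued along a line $R$, distribute the twelve points between the two components (moving enough of the double points onto $Z$ to break the extremal degree-one del Pezzo geometry that stalls the general induction), split the degree through the parameter of the degeneration, and apply Lemma \ref{lem:two} to bound the dimension of the limit system by those of the two restricted systems on $\Proj^2$ and on $\mathbb F_1$, chosen small enough to be shown empty by a direct dimension count. Scaling the entire configuration by $k$ keeps the two restricted systems in the same shape, so a single argument disposes of all multiples, and emptiness of the limit forces emptiness of $\mathcal L_{6k}((2k)^8,k^4)$ by upper semicontinuity of $h^0$.

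The delicate point in the degeneration route is the matching of the two restricted systems along the double curve $R$, equivalently the vanishing of the gluing kernel: this is simply the degeneration-side incarnation of the splitting-type phenomenon above, and it is exactly the failure of the reduction to behave well for the extremal value $N=8$ (where $-K_Y$ already carries a base point) that forces this configuration to be treated outside the main induction. I would therefore expect the bulk of the work to be a careful choice of the point distribution and of the degree parameter so that the kernel of the restriction to $R$ vanishes, after which both the $k=1$ case and its multiples follow from the same scaled reduction.
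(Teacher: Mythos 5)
Your reformulations are correct as far as they go: the intersection numbers, the identification of $6kH-2k\sum_{i=1}^8E_i$ with $-2kK_Y$ on the degree-one del Pezzo surface, and the count $h^0(Y,-2kK_Y)=2k^2+k+1$ against the $2k^2+2k$ conditions imposed by four $k$-fold points all check out, and they correctly show that emptiness is a genuine maximal-rank statement rather than a numerical triviality. But the proposal stops exactly where the proof has to begin. Both of your proposed routes are left unexecuted: the elliptic-fibration route is abandoned because you cannot control the splitting type of $\E_k$ uniformly in $k$, and the degeneration route is described only as ``distribute the twelve points between the two components, \dots chosen small enough to be shown empty by a direct dimension count,'' with no choice of distribution, no degrees, and no verification that the two restricted systems and the matching along $R$ actually kill all sections. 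You yourself flag that ``the bulk of the work'' remains; that bulk is the entire content of the Proposition.

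There is also a concrete reason to doubt that the unexecuted degeneration route goes through in the form you sketch. The $P$--$F$ degeneration of Lemma \ref{lem:two} needs $a=2m_1+1$ simple points to place on the plane component, i.e.\ $h\geq 2m_1+1$ (Lemma \ref{lem:one}); for $\mathcal{L}_6(2^8,1^4)$ one has $h=4<5=2m_1+1$, which is precisely why this system is excluded from the general induction and handled separately. The paper's actual proof uses a different tool, the collision technique of \cite{CM05}: the four $k$-fold points are collided into a ninth point of multiplicity $2k$ carrying a matching condition on the $2k$ tangent directions, reducing the problem to the subsystem of $\mathcal{L}_{6k}((2k)^9)$ satisfying that condition; since $\mathcal{L}_{6k}((2k)^9)$ consists of the single curve $2k$ times the cubic through the nine points, and that curve visibly violates a general matching condition, the system is empty. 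Nothing equivalent to this final, decisive step appears in your write-up, so the argument as it stands does not establish the statement.
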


\begin{proof} We use the collision technique introduced in \cite{CM05}, specifically the four points collision stated there in Proposition 3.1(c), which says that a general collision of four points of multiplicity $k$ results in a point of multiplicity $2k$ with a matching condition, i.e., the $2k$ tangent directions on the exceptional $\mathbb P^1$ are invariant under an involution which can be taken to be general. 

We apply this  to the four $k$--tuple points of $\L$ and conclude that the emptiness of $\L$ follows from the emptiness of the subsystem of $\mathcal{L}_{6k}((2k)^9)$ which satisfies the aforementioned matching condition on the ninth point. However   $\mathcal{L}_{6k}((2k)^9)$ has dimension zero, consisting of the unique cubic through the nine points with multiplicity $2k$, and this curve does not satisfy the matching condition. 
\end{proof}

\begin{corollary}\label{prop:6.1} For any positive integer $k$ the linear system $\L=\mathcal{L}_{6k}((2k)^7,k^8)$ is empty.
\end{corollary}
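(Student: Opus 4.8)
The plan is to derive this Corollary from Proposition \ref{prop:6} by a single application of the very same four-point collision, run ``in reverse'' as a degeneration. The key observation is purely combinatorial: the system $\L=\mathcal{L}_{6k}((2k)^7,k^8)$ of the Corollary and the system $\mathcal{L}_{6k}((2k)^8,k^4)$ of Proposition \ref{prop:6} have the same degree $6k$, and a general collision of four points of multiplicity $k$ produces one point of multiplicity $2k$. Thus colliding four of the eight $k$-fold base points of $\L$ turns the multiplicity profile $(2k)^7,k^8$ into $(2k)^7\cdot(2k)^1,k^4=(2k)^8,k^4$, which is exactly the profile treated in Proposition \ref{prop:6}.

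Concretely, I would specialize four of the eight general $k$-fold base points of $\L$ so that they undergo a general collision as in \cite{CM05}, Proposition 3.1(c). This realizes $\L$ as the general fibre of a flat family of linear systems whose special fibre consists of curves acquiring a single point of multiplicity $2k$ at the collision point (subject to the matching condition on the tangent directions). In particular, the special (limit) system is contained in the full system $\mathcal{L}_{6k}((2k)^8,k^4)$, regardless of the matching condition.

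To conclude, I would invoke Proposition \ref{prop:6}, which gives $\mathcal{L}_{6k}((2k)^8,k^4)=\emptyset$; hence the limit system, being a subsystem of it, is also empty. By upper semicontinuity of $\dim$ in the flat family realizing the collision, one has $\dim\L\leq\dim(\text{limit system})=-1$, so $\L$ is empty as well. Note that because we only need emptiness of the \emph{whole} target system $\mathcal{L}_{6k}((2k)^8,k^4)$, and not of the smaller matching subsystem, the argument does not require any analysis of the matching condition itself.

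The only delicate point is ensuring that the collision genuinely exhibits $\L$ as the general member of such a family with special fibre landing inside $\mathcal{L}_{6k}((2k)^8,k^4)$, and that the semicontinuity is applied in the correct direction (limit dimension bounds the general dimension from above). Both are furnished directly by the cited collision statement of \cite{CM05}, so I expect the main work to be merely in phrasing the degeneration cleanly rather than in overcoming any real obstacle.
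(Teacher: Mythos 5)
Your proposal is correct and coincides with the paper's own argument: the paper likewise collides four of the eight $k$-fold points into one point of multiplicity $2k$ and deduces emptiness from Proposition \ref{prop:6} by semicontinuity. Your added remark that the matching condition is irrelevant here (since the full system $\mathcal{L}_{6k}((2k)^8,k^4)$ is already empty) is a correct and clarifying observation.
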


\begin{proof} We use again the collision technique as above, colliding four of the points of multiplicity $k$ to a point of multiplicity $2k$. Then the emptiness of $\L$ follows from the emptiness of the system of $\mathcal{L}_{6k}((2k)^8, k^4)$, proved in Proposition \ref {prop:6}.\end{proof}

These two results are contained in 
\cite[Remark 5.1.6, Remark 5.5.11 and Proposition 5.5.10]{CHMR}; 
we have included the brief proofs here for completeness.

\section{The inductive strategy}\label{sec:ind}

In this section we will present the strategy of the proof of Theorem \ref {thm:main} and we will prove a couple of useful lemmas. 

The proof will be by induction on $N$ and we first prove the statement for $N=0,1$. Then, for $N\geq 2$, we start with a given system  $\mathcal{L}_d(m_1,\ldots,m_n)$ satisfying $\mathcal H$ and different from multiples of the forbidden systems (a) and (b) of Theorem \ref {thm:main}. Since $\mathcal{L}_d(m_1,\ldots,m_n)$ satisfies $\mathcal H$, we have $e\geq 0$. The first step is to reduce to $e=0$. For this we use Lemma \ref {lem:two} below, which raises $m_1$ by 1,  decreases  $e$ by 1 (and preserves $\mathcal H$). Repeated applications of this allow us to assume $e=0$. 

Next we define a \emph{basic move} on $e=0$ systems that satisfy $\mathcal H$. Apply again  Lemma \ref {lem:two}  and increase $m_1$ by $1$. Since $e=0$, the resulting system is no longer Cremona reduced, and we reduce it: the hypotheses allow us to control the applications of the needed quadratic transformations. Then, if $e>0$ for the new system, we apply Lemma \ref {lem:two} again repeatedly to obtain $e=0$. We note that in this process $\mathcal H$ is always preserved.  

The strategy is now to apply basic moves iteratively and show that either $N$ must decrease, so that we can apply induction, or $N=8$ and we arrive at  a system that we directly know is empty, such as a  multiple of the degree 6 linear systems considered in \S \ref {sec:deg6}. 

Next we present the aforementioned useful lemmas.

\begin{lemma}\label{lem:one} Suppose $1 \leq N \leq 8$, $\mathcal H$ holds and the system is not $\mathcal{L}_6(2^8,1^4)$. Then $h \geq 2m_1+1$, unless $N=1$ and $d=m_1$ in which case $h=0$.
\end{lemma}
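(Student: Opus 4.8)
The plan is to treat this as a purely numerical inequality extracted from hypothesis $\mathcal H$, with no appeal to the geometry of the system. The two ingredients are condition (iii), which after separating out the multiplicities gives the exact value $h = d^2 - \sum_{j=1}^N m_j^2$ (the $h$ simple base points each contribute $1$ to $\sum_i m_i^2$, while points of multiplicity $0$ contribute nothing), and condition (ii), which gives $d \geq m_1+m_2+m_3$. Combining these yields the basic lower bound $h \geq (m_1+m_2+m_3)^2 - \sum_{j=1}^N m_j^2 = 2(m_1m_2 + m_1m_3 + m_2m_3) - \sum_{j=4}^N m_j^2$, and the whole lemma reduces to showing this quantity is at least $2m_1+1$ in all but the listed cases. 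Since enlarging $d$ only increases $h$, it suffices to work with the extremal value $d = m_1+m_2+m_3$.

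First I would dispose of the small cases. For $N=1$ we have $h = d^2 - m_1^2 = (d-m_1)(d+m_1)$; if $d = m_1$ this is $0$, which is the stated exception, while if $d > m_1$ then $d \geq m_1 + 1$ forces $h \geq 2m_1+1$. For $N=2$, using $d \geq m_1+m_2$ gives $h \geq 2m_1m_2 \geq 4m_1 > 2m_1+1$, since $m_2 \geq 2$.

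For $3 \leq N \leq 8$ all of $m_1, m_2, m_3$ are at least $2$, and the point is to control the subtracted tail $\sum_{j=4}^N m_j^2$. Because the multiplicities are nonincreasing, each of the $N-3$ tail terms is at most $m_3^2$, so $\sum_{j=4}^N m_j^2 \leq (N-3)\,m_3^2$. Writing $m_1 = m_3 + s$ and $m_2 = m_3 + t$ with $s \geq t \geq 0$ and substituting, the desired inequality $h \geq 2m_1+1$ reduces to the nonnegativity of a quadratic expression in $m_3, s, t$. For $N \leq 7$ we have $N-3 \leq 4$, so the leading constant works out to $2m_3^2 - 2m_3 - 1 \geq 3 > 0$, and all the cross terms (with coefficients $4m_3-2$, $4m_3$, $2$) are nonnegative; hence the bound holds strictly, with no exception.

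The main obstacle is the endpoint case $N=8$, where $N-3 = 5$ and the leading constant drops to $m_3^2 - 2m_3 - 1$: this is nonnegative exactly when $m_3 \geq 3$, but equals $-1$ when $m_3 = 2$. I would then scrutinize the residual $m_3 = 2$ situation, where the estimate becomes $h - (2m_1+1) \geq -1 + 6s + 8t + 2st$. The cross terms recover positivity as soon as $s \geq 1$, so the only way to remain negative is $s = t = 0$, which forces $m_1 = m_2 = m_3 = 2$ and hence, since all the remaining multiplicities lie between $2$ and $m_3 = 2$, the system $\mathcal L_6(2^8, 1^4)$ with $h = 4 < 5 = 2m_1+1$. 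Pinning down that this is the \emph{unique} failing configuration, and that it genuinely violates the bound, is the delicate part of the argument and precisely the reason the hypothesis removes $\mathcal L_6(2^8,1^4)$ by hand.
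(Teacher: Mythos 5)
Your proposal is correct, and its skeleton coincides with the paper's: the same translation of condition (iii) into $h=d^2-\sum_{j=1}^N m_j^2$, the same direct treatment of $N=1$ (factoring $d^2-m_1^2$) and of $N=2$ (via $d\geq m_1+m_2$ and $m_2\geq 2$), and the same starting inequality $h\geq 2(m_1m_2+m_1m_3+m_2m_3)-\sum_{j=4}^N m_j^2$ for $N\geq 3$. Where you diverge is in how this last bound is beaten down to $2m_1+1$. The paper pairs terms: $2m_2m_3\geq m_4^2+m_5^2$, $2m_1m_3\geq m_6^2+m_7^2$, $m_1m_2\geq m_8^2$, leaving a surplus $m_1m_2\geq 2m_1$, and then reads off the exceptional system from the condition that \emph{all} these inequalities be equalities (forcing $N=8$, all $m_i=2$, $d=6$). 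You instead bound the tail crudely by $(N-3)m_3^2$, substitute $m_1=m_3+s$, $m_2=m_3+t$, and check positivity of the resulting quadratic; this is more mechanical but yields the extra information that for $N\leq 7$ the bound holds with no exception, and isolates the failure at $N=8$, $m_3=2$, $s=t=0$. One small point you should make fully explicit: the reduction ``it suffices to take $d=m_1+m_2+m_3$'' only shows that success at the extremal degree propagates upward, so in the residual configuration $m_1=\cdots=m_8=2$ you must still check $d\geq 7$ separately (there $h=d^2-32\geq 17$), confirming that $\mathcal L_6(2^8,1^4)$ with $h=4$ is genuinely the unique violator. With that sentence added, your argument is complete and equivalent in strength to the paper's.
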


\begin{proof} 
If $N=1$ and $d>m_1$, then $h=d^2-m_1^2\geq (m_1+1)^2-m^2_1= 2m_1+1$ and we are done.

Suppose $N=2$. Then $d \geq m_1+m_2$, so 
$h=d^2-m_1^2-m^2_2\geq 2m_1m_2\geq 4m_1\geq 2m_1+1$ as wanted.

Next suppose $N\geq 3$.
Since $d \geq m_1+m_2+m_3$, we have 
$$d^2 \geq (m_1+m_2+m_3)^2 = m_1^2+m_2^2+m_3^2 + 2m_1m_2+2m_1m_3+2m_2m_3.$$
As the $m_i$s are in descending order, we have $2m_2m_3 \geq m_4^2+m_5^2$, $2m_1m_3 \geq m_6^2+m_7^2$, and $m_1m_2 \geq m_8^2$, so that $h \geq m_1m_2 \geq 2m_1$.

Suppose that $h=2m_1$. Then all the above inequalities are equalities, which implies $N=8$ and all $m_i$s equal to $2$.
In that case $h = d^2 - 32$, forcing $d=6$, which is forbidden by hypothesis. Hence $h \geq 2m_1+1$. \end{proof}

Suppose next $2\leq N \leq 8$, $\mathcal{L}_d(m_1,\ldots,m_n)$ satisfies $\mathcal H$ and it is different from $\mathcal{L}_6(2^8,1^4)$.

Fix $k \geq 1$ and consider $\mathcal{L}_{kd}(km_1, \ldots,km_n)$. 
Then we can make the $P$--$F$ degeneration described in \cite {CM98} with the limit line bundle of $\mathcal{L}_{kd}(km_1, \ldots,km_n)$ having aspects 
$$\mathcal{L}_P = \mathcal{L}_{k(m_1+1)}(km_1, k^a)$$ 
and
$$\mathcal{L}_F = \mathcal{L}_{kd}(k(m_1+1),km_2,\ldots,km_N, k^b)$$
with $a = 2m_1+1$ and $b = h-2m_1-1$ on $P$ and $F$ respectively. Note that Lemma \ref {lem:one} implies that $a$ and $b$ are positive which is the prerequisite to apply the $P$--$F$ degeneration described above. 

\begin{lemma}\label{lem:two}
Suppose that, in the above setting, $\mathcal{L}_F$ is empty. Then so is $\mathcal{L}_{kd}(km_1, \ldots,km_n)$.
\end{lemma}

\begin{proof} First we notice that $\mathcal{L}_{m_1+1}(m_1, 1^a)$ is a pencil and no curve in this pencil contains a general line.
Then  $\mathcal{L}_{k(m_1+1)}(km_1, k^a)$ is composed with this pencil and no curve in this system contains a general line. Hence the system 
$\hat {\mathcal{L}}_P=\mathcal{L}_{km_1+k-1}(km_1, k^a)$ is empty. 
Then the analysis in \cite {CM98} implies the result, because a section of the limit line bundle is zero on $F$ by hypothesis, and is also zero on $P$ by the emptiness of $\hat {\mathcal{L}}_P$. \end{proof}


An additional application of the $P$--$F$ degeneration method enables us to prove the following statements.  The first one is also a consequence of
\cite[Remark 5.5.11]{CHMR}, where more general statements are made; 
we include the brief proof for the convenience of the reader.

\begin{proposition}\label{prop:9} For any positive integer $k$ the linear system $\L=\mathcal{L}_{9k}((3k)^8,k^9)$ is empty.
\end{proposition}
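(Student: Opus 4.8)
The plan is to apply the $P$--$F$ degeneration of \cite{CM98} to $\L=\mathcal{L}_{9k}((3k)^8,k^9)$ exactly as in the setup preceding Lemma \ref{lem:two}. The base system $\mathcal{L}_9(3^8,1^9)$ satisfies $\mathcal H$ (here $e=9-9=0$ and $d^2=81=8\cdot 9+9=\sum m_i^2$), with $d=9$, $m_1=\cdots=m_8=3$, $N=8$ and $h=9$, so that $a=2m_1+1=7$ and $b=h-2m_1-1=2$. By Lemma \ref{lem:one} these are positive, so the degeneration applies, and its two aspects are $\mathcal{L}_P=\mathcal{L}_{4k}(3k,k^7)$ and $\mathcal{L}_F=\mathcal{L}_{9k}(4k,(3k)^7,k^2)$. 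As in the proof of Lemma \ref{lem:two}, I would first record that the kernel system $\hat{\mathcal{L}}_P=\mathcal{L}_{4k-1}(3k,k^7)$ is empty: the system $\mathcal{L}_4(3,1^7)$ is a pencil no member of which contains a line, the system $\mathcal{L}_{4k}(3k,k^7)$ is composed with this pencil, and hence no curve of $\hat{\mathcal{L}}_P$ can contain a general line.

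Unlike the situation of Lemma \ref{lem:two}, the aspect $\mathcal{L}_F$ is \emph{not} empty, which is why an additional argument is needed. Indeed $\mathcal{L}_F$ violates condition (ii) of $\mathcal H$ (one computes $e=-k$), and by repeatedly applying quadratic transformations based at the three points of largest multiplicity one arrives at $\mathcal{L}_{3k}(k^9)$, the forbidden system (b): this is the $k$--fold of the unique cubic through the nine points, so $\mathcal{L}_F$ is zero--dimensional and consists of a single distinguished curve. Consequently the basic reduction step of Lemma \ref{lem:two} does not by itself force emptiness, and the argument must instead be completed along the double curve, in the spirit of the proof of Proposition \ref{prop:6}.

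The key point is that a section of the limit line bundle is a matching pair $(s_P,s_F)$ whose aspects agree along the double curve $R$, together with a matching condition at the point of $R$ carrying the straddling multiple point (of multiplicity $3k$ on $P$ and $4k$ on $F$). Since $\hat{\mathcal{L}}_P$ is empty, the $P$--aspect of any section is determined by its restriction to $R$, so a nonzero section would force the unique curve of $\mathcal{L}_F$ to fulfil this matching condition. The plan is then to show that this distinguished curve does \emph{not} satisfy the matching condition --- exactly as the unique cubic fails it in Proposition \ref{prop:6} --- so that $H^0$ of the limit bundle vanishes and, by semicontinuity, $\L$ is empty. I expect the main obstacle to be precisely this final step: making the matching condition at the special point of $R$ explicit and verifying that the distinguished cubic violates it.
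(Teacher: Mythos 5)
Your setup coincides with the paper's: the same $P$--$F$ degeneration, the same aspects $\L_P=\L_{4k}(3k,k^7)$ and $\L_F=\L_{9k}(4k,(3k)^7,k^2)$, the same observation that $\hat{\L}_P$ is empty because $\L_P$ is composed with the pencil $\L_4(3,1^7)$, and the same identification of $\L_F$, after Cremona reduction, as the single curve given by the $k$--fold cubic. But your argument stops exactly where the proposition's real content lies: you announce a ``plan'' to show that this distinguished curve fails the matching condition and explicitly flag that verification as the main unresolved obstacle. As written, that is a genuine gap, not a detail.

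Two remarks on how the gap is actually closed. First, the relevant matching here is not a tangent--direction condition at a ``straddling'' point as in the collision technique of Proposition \ref{prop:6} --- that mechanism belongs to \cite{CM05}, not to the $P$--$F$ degeneration of \cite{CM98}. In the present degeneration no multiple point is shared between $P$ and $F$; a section of the limit bundle is simply a pair $(s_P,s_F)$ whose restrictions to the double curve $R$ agree as divisors of degree $4k$ on $R$. Second, the match is excluded by a generality argument on $R$: the unique curve of $\L_F$ meets $R$ in $4$ points, each with multiplicity $k$, and these points may be assumed general on $R$; on the other side, since $\L_P$ is composed with the pencil $\L_4(3,1^7)$, the divisors it cuts on $R$ form a $g^k_{4k}$ composed with the $g^1_4$ cut by that pencil, i.e., they are sums of $k$ divisors of a fixed $g^1_4$. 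A divisor $k(p_1+p_2+p_3+p_4)$ with $p_1,\ldots,p_4$ general is not of that form, so no matching divisor exists. Combined with the emptiness of both kernel systems $\hat{\L}_P$ and $\hat{\L}_F$ (you verify the first, but you should also record the second, which guarantees that $s_F|_R\neq 0$ whenever $s_F\neq 0$), this forces every section of the limit bundle to vanish, and semicontinuity gives the emptiness of $\L$.
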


\begin{proof}  
We make the $P$--$F$ degeneration as above. 
The relevant systems are  
$$
\L_F=\L_{9k}(4k, (3k)^7, k^2)\quad \text{and}\quad   \L_P=\L_{4k}(3k, k^7).
$$
By Cremona reducing $\L_F$ we see that it consists of 
a unique curve with multiplicity $k$
which is the Cremona image of a cubic through $9$ general simple points. 
It meets the double curve $R=P\cap F$ at $4$ points with multiplicity $k$ 
which can be assumed to be general. 
The kernel system $\hat{\L}_F$ is empty. 
The system $\L_P$ is composed with the pencil $\L_{4}(3, 1^7)$, 
and it cuts out on $R$ the linear system (a $g^k_{4k}$)
composed with the $g^1_4$ cut out on $R$ by $\L_{4}(3, 1^7)$. 
The kernel linear system $\hat{\L}_P$ is also empty. 
By the generality of the restriction of $\L_F$ to $R$, 
there can be no matching divisor in $\L_P$ with the unique curve in $\L_F$ 
and therefore no section of the limit line bundle, since both kernel linear systems are empty (see \cite {CM98}). This ends the proof of the proposition. 
\end{proof}

Before the next lemma we need a simple remark.

\begin{remark}\label{rem:simple} We notice that a linear system $\mathcal{L}_d(m_1,\ldots,m_n)$ cannot be a multiple of the system (a) of Theorem \ref {thm:main} if $m_2\geq 1$ and it cannot be a multiple of the system (b) of Theorem \ref {thm:main} if  $m_1>m_8$. \end{remark}

\begin{lemma}\label{lem:three} Let $\mathcal{L}_d(m_1,\ldots,m_n)$ satisfy $\mathcal H$, with $8\geq N\geq 2$ and $e=0$ and assume it is different from a multiple of $\mathcal{L}_6(2^8,1^4)$ or of $\L_9(3^8, 1^9)$ or of $\L_6(2^7, 1^8)$.  
Then the result of a basic move is different from a multiple of either of the two forbidden systems (a) and (b) of Theorem \ref {thm:main}.
\end{lemma}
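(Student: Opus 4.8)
By Remark~\ref{rem:simple} it is enough to show that the output $\mathcal{L}'=\mathcal{L}_{d'}(m_1',\dots)$ of a basic move has $m_2'\geq 1$ (so $\mathcal{L}'$ is not a multiple of (a)) and that $\mathcal{L}'\neq\mathcal{L}_3(1^9)$ (so $\mathcal{L}'$ is not a multiple of (b)): indeed a basic move keeps $N\leq 8$, and since $\mathcal{L}_{3k}(k^9)$ has nine points of multiplicity $k\geq 2$ when $k\geq 2$, the only multiple of (b) with $N\leq 8$ is $\mathcal{L}_3(1^9)$. I first record that a basic move preserves $\mathcal H$, the equality $e=0$, condition (iii), and the bound $N\leq 8$, each of which I would verify directly on the two constituents of the move (the Lemma~\ref{lem:two} step and a standard quadratic transformation). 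The engine of the proof is the anticanonical degree $\delta=3d-\sum_i m_i=-K\cdot\mathcal{L}$: a quadratic transformation preserves both $\delta$ and the self--intersection, whereas a single Lemma~\ref{lem:two} step (which raises $m_1$ by $1$ and discards $2m_1+1$ simple points) preserves the self--intersection but increases $\delta$ by exactly $2m_1$. Writing a basic move as one Lemma~\ref{lem:two} step, a block of quadratic transformations, and possibly further Lemma~\ref{lem:two} steps, I get $\delta'=\delta+2m_1+2\Sigma$ with $\Sigma\geq 0$.

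For (b), note that $\mathcal{L}_3(1^9)$ has all multiplicities equal to $1$, so no later Lemma~\ref{lem:two} step can occur (each would leave a point of multiplicity $\geq 2$); hence $\delta'=\delta+2m_1$, and since $\delta(\mathcal{L}_3(1^9))=0$ the output can equal $\mathcal{L}_3(1^9)$ only if $\delta=-2m_1$. Substituting $\delta=-2m_1$, $e=0$ and (iii) into the multiplicity data yields $\sum_{i\geq 4}m_i+h=4m_1+2m_2+2m_3$ and $\sum_{i\geq 4}m_i^2+h=2(m_1m_2+m_1m_3+m_2m_3)$; subtracting gives $\sum_{i\geq 4}m_i(m_i-1)$ on the left, which is at most $5\,m_3(m_3-1)$ because there are at most $N-3\leq 5$ intermediate multiplicities, each $\leq m_3$. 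A short case analysis in $m_3$ (together with an immediate separate check when $N=2$) then shows that the only admissible solutions are $\mathcal{L}_6(2^7,1^8)$ and $\mathcal{L}_9(3^8,1^9)$, which are precisely the excluded systems. Thus for any other input $\mathcal{L}'\neq\mathcal{L}_3(1^9)$.

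For (a), a multiple is the single--point system $\mathcal{L}_k(k)$, which I would dispose of by a rigidity argument rather than by degree bookkeeping. Since the first Lemma~\ref{lem:two} step forces $e=-1$, at least one quadratic transformation occurs, and $\mathcal{L}'$ is produced either by a quadratic transformation applied to a non--reduced class $T$ (with $e(T)<0$) or by a final Lemma~\ref{lem:two} step applied to a reduced class $S$ (with $e(S)=1$). In the first case a single--point output forces all points of $T$ off the three transformation points to have multiplicity $0$ and the three transformed multiplicities to be $k,0,0$; solving these relations forces $T=\mathcal{L}_k(k)$, whence $e(T)=0$, contradicting $e(T)<0$. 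In the second case, reversing the step gives $S=\mathcal{L}_k(k-1,1^{2k-1})$, whose top three multiplicities yield $e(S)=-1\neq 1$ (and for $k=2$ one even lands on the inadmissible $\mathcal{L}_6(2^8,1^4)$, for which Lemma~\ref{lem:two} does not apply). Either way we reach a contradiction, so $m_2'\geq 1$ and $\mathcal{L}'$ is not a multiple of (a).

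I expect the main obstacle to be the (b) direction: identifying $\delta$ as the correct invariant and then carrying out the finite classification so that exactly the excluded systems emerge. The two delicate points are justifying that no later Lemma~\ref{lem:two} step intervenes when all target multiplicities equal $1$, and controlling the re--sorting of the multiplicities during the quadratic transformations so that the bound $N\leq 8$ is genuinely available to terminate the enumeration.
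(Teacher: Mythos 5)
Your proposal takes a genuinely different route from the paper's: the paper simply runs the basic move explicitly through a finite case analysis on the pattern of $m_2,\dots,m_8$, computes the output in each branch (at most a few quadratic transformations), and reads off $m_2'\geq 1$ and $m_1'>m_8'$ via Remark~\ref{rem:simple}, with the two hypotheses-excluded systems $\L_6(2^7,1^8)$ and $\L_9(3^8,1^9)$ emerging as the only branches that could land on $\L_3(1^9)$. You instead control the output by invariants. For the (b) direction your argument checks out: the anticanonical degree $\delta=3d-\sum_i m_i$ is indeed preserved by quadratic transformations and increased by $2m_1$ by a Lemma~\ref{lem:two} step, the bound $N'\leq 8$ does hold (reductions are applied only to non-reduced classes, where the three top multiplicities strictly decrease), a terminal Lemma~\ref{lem:two} step cannot output a system with all multiplicities equal to $1$, and I verified that the resulting Diophantine system with $\delta=-2m_1$, $e=0$, (iii), $N\leq 8$ forces $m_3\leq 3$ and then yields exactly $\L_6(2^7,1^8)$ and $\L_9(3^8,1^9)$. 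This half is clean and arguably more conceptual than the paper's enumeration.

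The (a) direction, however, has a genuine gap in your first case. When the last operation is a quadratic transformation $T\to\L'=\L_k(k)$, you claim the relations force $T=\L_k(k)$; this tacitly assumes the surviving point of multiplicity $k$ is one of the three transformation points. If it lies off them, the three transformation points all acquire multiplicity $0$ in $\L'$, which gives $\mu_1=\mu_2=\mu_3=d_T/2$ and $d'=d_T/2$, i.e. $T=\L_{2k}(k^4)$ up to points of multiplicity zero. This $T$ satisfies $e(T)=2k-3k=-k<0$, so it is non-reduced, its three highest points are legitimate centers for the reduction step, and the transformation really does produce $\L_k(k)$: the contradiction with $e(T)<0$ that your argument relies on simply does not arise. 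To close the gap you would have to exclude $\L_{2k}(k^4)$ as an intermediate class of the reduction block -- e.g.\ by showing the simple points guaranteed by Lemma~\ref{lem:one} cannot all be destroyed, or by running the $\delta$ and self-intersection bookkeeping against this specific target -- and none of this is in the write-up. The paper never meets this difficulty because its explicit branch-by-branch computation exhibits the output with $m_2'\geq 1$ directly.
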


\begin{proof}  Since the system is not a multiple of $\mathcal{L}_6(2^8,1^4)$ we can do a basic move (see Lemma \ref {lem:one}). 

The first step in the basic move leads to the system  $\mathcal{L}_d(m_1+1,m_2,\ldots,m_n)$. Then, applying the quadratic transformation based at the first three points, gives the linear system 
$\L_1:=\mathcal{L}_{d-1}(m_1,m_2-1,m_3-1,m_4\ldots,m_n)$. 

If $m_3>m_4$, the three highest multiplicities are $m_1,m_2-1,m_3-1$ and therefore the system is Cremona reduced, with $e=1$.  At this point, to finish the basic move, we make one more application of Lemma \ref {lem:two} to reduce to the case $e=0$. This leads to the system $\mathcal{L}_{d-1}(m_1+1,m_2-1,m_3-1,m_4,\ldots,m_n)$, and we conclude by applying Remark \ref {rem:simple}. 

If $m_3=m_4$ but $m_2>m_5$ then the three highest multiplicities of $\L_1$ are $m_1, m_2-1, m_4=m_3$ in some order.  We see that $e=0$ for $\L_1$ so the basic move is finished and we conclude again by using Remark \ref {rem:simple}. 

We may now assume $m_3=m_4$ and $m_2=m_5$, hence $m_2=\cdots=m_5=m$. In this case reordering the multiplicities of $\L_1$ the three highest ones are $m_1,m,m$. This is not Cremona reduced. After making the quadratic transformation based at the three points of highest multiplicity, we get to the system $\L_2=\L_{d-2}(m_1-1,(m-1)^4, m_6,\ldots, m_n)$. Now we have three cases.

The first case is $m_6<m_2=m$. In that case the three highest multiplicities of $\L_2$ are $m_1-1, m-1, m-1$, the system is Cremona reduced and $e=1$. Moreover 
 $\L_2$ does not coincide with a multiple of $\mathcal{L}_6(2^8,1^4)$ that all have $e=0$.  Then we can apply Lemma \ref {lem:two} one more time to finish the basic move and we get the system $\L_{d-2}(m_1,(m-1)^4, m_6,\ldots, m_n)$. We conclude again by using Remark \ref {rem:simple} because $m_1>m-1\geq 1$. 
 
In the second case we have $m_6=m$ and $m_7<m$. In that case the three highest multiplicities of $\L_2$ are $m_1-1, m, m-1$ and the system is Cremona reduced with  $e=0$. This ends the basic move and we apply Remark \ref{rem:simple} to finish. 

In the final case we have $m_6=m_7=m$. Now the three highest multiplicities of $\L_2$ are $m_1-1, m, m$ and the system is not Cremona reduced. One more quadratic transformation gives $\L_3=\L_{d-3}(m_1-2,(m-1)^6, m_8,\ldots, m_n)$. 

If $m_1>m$ and $m_8<m$ the three highest multiplicities are $m_1-2, m-1, m-1$, the system is Cremona reduced, with $e=1$. In this case we have to make a further step to accomplish the basic move and we get $\L_{d-3}(m_1-1,(m-1)^6, m_8,\ldots, m_n)$. We apply again Remark \ref{rem:simple} to finish.

If $m_1>m$ and $m_8=m$ the three highest multiplicities of $\L_3$ are $m_1-2, m, m-1$. Then $\L_3$  is Cremona reduced with $e=0$, the basic move is finished and  we conclude by applying Remark \ref{rem:simple} since $m>m-1$.

 If $m_1=m$ and  $m_8<m$, the three highest multiplicities of $\L_3$ are $m-1, m-1, m-1$. Then $\L_3$ is Cremona reduced with $e=0$. The basic move is finished and   by applying Remark \ref{rem:simple} we see that this is not a multiple of the forbidden system (a) of Theorem \ref {thm:main} by Remark \ref {rem:simple}. It could be a multiple of the (b) system, but this only happens if $m=2$, $d=6$, and the original system is $\L_6(2^7,1^8)$, which is forbidden by hypothesis.
 
 If $m_1=m$ and $m_8=m$ the system we start with is $\L_{3m}(m^8, 1^{m^2})$ and we notice that $m\geq 3$ because $m=2$ is forbidden by hypothesis. Then in $\L_3$  the three highest multiplicities are $m, m-1, m-1$. At this point  the system is not Cremona reduced. Reducing it one gets $\L_4:=\L_{d-6}((m-2)^7,m-3, m_9,\ldots, m_n)$.  This again is not a multiple of the forbidden system (a) of Theorem \ref {thm:main}. It could be a multiple of the (b) system, but this only happens if $m=3$, $d=9$, and the original system is $\L_9(3^8,1^9)$, which is forbidden by hypothesis. 
 
 This ends the proof of the lemma. \end{proof}

\section{The proof of Theorem \ref{thm:main}}\label{sec:proof}
Now we are in a position to prove Theorem \ref{thm:main}. 

 \begin{proof}[Proof of Theorem \ref{thm:main}] 
If $N=0$, then $\L = \L_d(1^{d^2})$. 
We have excluded the $d=1$ case. 
The $d=2$ case does not verify $\mathcal H$. 
We have excluded the $d=3$ case. 
For $d \geq 4$, this is Nagata's Theorem (see \cite {N}).

If $N=1$, then $\L = \L_d(m_1, 1^h)$ where $h = d^2-m_1^2$. 
If $h=0$ then this is a multiple of $\L_1(1)$, which we have excluded. 
If $h \geq 1$ then by Lemma \ref {lem:one}, we have 
$h \geq 2m_1+1$, so that there are at least five simple points.  z
Since $\mathcal H$ holds, we must have $d \geq m_1+1+1 \geq 4$. 
Then the result is contained in \cite {CM21}.

Next we assume $N\geq 2$. 
By Propositions \ref {prop:6}, \ref {prop:9}, and Corollary \ref {prop:6.1}, 
we can assume that the system is not a multiple of $\mathcal{L}_6(2^8,1^4)$ 
or of $\L_9(3^8, 1^9)$ or of $\L_6(2^7, 1^8)$. 
By applying (if necessary) Lemma \ref {lem:two}, we can assume that $e=0$. 
Then the hypotheses of Lemma \ref {lem:three} are met 
and we can make a basic move which does not arrive at a forbidden system. 
If the result of the move is a multiple of $\mathcal{L}_6(2^8,1^4)$
or of $\L_9(3^8, 1^9)$ or of $\L_6(2^7, 1^8)$, 
that are empty by Propositions \ref {prop:6}, \ref {prop:9}, and Corollary \ref {prop:6.1}, 
we apply Lemma \ref {lem:two} again and conclude that the original system is empty. 
If not we have reduced to a linear system with lower multiplicities.
So, repeated applications of a basic move 
either results in an empty system or eventually decreases $N$, 
so that we can finish by induction. 
\end{proof}

We notice that the theorem is certainly false for the forbidden systems (a) and (b) of Theorem \ref {thm:main}.

\end{document}